\numberwithin{equation}{section}
\newtheorem{maintheorem}{Theorem}	
\newtheorem{theorem}{Theorem}[section]
\newtheorem{lemma}[theorem]{Lemma}
\newtheorem{proposition}[theorem]{Proposition}
\theoremstyle{definition}
\newtheorem{definition}[theorem]{Definition}
\newtheorem{notation}[theorem]{Notation}
\newtheorem{remark}[theorem]{Remark}
\newcommand{\LL}{\mathbb{L}}
\newcommand{\CC}{\mathbb{C}}
\newcommand{\A}{\mathbb{A}}
\newcommand{\G}{\mathbb{G}}
\newcommand{\GG}{\mathbb{G}}
\newcommand{\calM}{\mathcal{M}}
\newcommand{\calMbar}{\overline{\mathcal{M}}}
\DeclareMathOperator{\Spec}{Spec}
\DeclareMathOperator{\Div}{Div}
\DeclareMathOperator{\calDiv}{\mathcal{D}iv}
\DeclareMathOperator{\calDivbar}{\overline{\mathcal{D}iv}}
\DeclareMathOperator{\Hilb}{Hilb}
\DeclareMathOperator{\zdiv}{Z_{div}}
\DeclareMathOperator{\zmot}{Z_{mot}}
\title[Divisorial motivic zeta functions for marked stable curves]{Divisorial motivic zeta functions for marked stable curves }
\author{Madeline Brandt}
\address{Department of Mathematics, University of California, Berkeley, 970 Evans Hall, Berkeley, CA 94720}
\email{\href{mailto:brandtm@berkeley.edu}{brandtm@berkeley.edu}}
\author{Martin Ulirsch}
\address{Institut f\"ur Mathematik, Goethe-Universit\"at Frankfurt, 60325 Frankfurt am Main, Germany}
\email{\href{mailto:ulirsch@math.uni-frankfurt.de}{ulirsch@math.uni-frankfurt.de}}
\begin{document}

\begin{abstract} We define a divisorial motivic zeta function for stable curves with marked points which agrees with Kapranov's motivic zeta function when the curve is smooth and unmarked. We show that this zeta function is rational, and give a formula in terms of the dual graph of the curve.
\end{abstract}

\maketitle
\setcounter{tocdepth}{1}
\section{Introduction}

Let $k$ be an algebraically closed field. The \emph{Grothendieck ring} $K_0(\text{Var}/k)$ \emph{of varieties over $k$} is the free abelian group on the set of isomorphism classes of varieties modulo the relations
$
{[X] = \big[X \backslash{Y}\big] + [Y]},
$
where $Y$ is a closed subvariety of $X$. It naturally carries a product given by taking the product of varieties:
$
{[X] \cdot [Y] = \big[(X \times_{k} Y)\big]}.
$
For simplicity, we assume that $k$ has characteristic zero. Otherwise, we must instead work with $\widetilde{K}_0(\text{Var} / k)$, which is the quotient of $K_0(\text{Var} / k)$ by the relations generated by $[X] - [Y]$ whenever there is a radical surjective morphism $X\rightarrow Y$ of varieties over $k$; the product in $\widetilde{K}_0(\text{Var} / k)$ is given by the reduced product of algebraic varieties.
Throughout, we denote the class of $\mathbb{A}^1_k$ by $\LL$ in $K_0(\text{Var} / k)$.
 
Let $X$ be a quasiprojective variety over $k$. For $d \geq 1$, the symmetric group $S_d$ acts on $X^d$, and the quotient by this action gives $X_d$, the \emph{d-th symmetric product} of $X$. By convention, we set $X_0 = \text{Spec}\ k$. 
Kapranov \cite{Kapranov} defines the \emph{motivic zeta function of} $X$ with coefficients in the Grothendieck ring:
$$
Z_{\text{mot}}\big(X;t\big) = \sum_{d \geq 0} \big[X_d\big] t^d  \ \ \ \ \ \ \in\ \  1 + t \cdot K_0(\text{Var} / k)\llbracket t\rrbracket.
$$
 This generalizes Weil's zeta function for varieties over finite fields to the motivic setting. When $X$ is a smooth projective curve, $Z_{\text{mot}}(X,t)$ is rational (see e.g. \cite{Kapranov} and \cite{Litt_zetaforcurves}).


In this paper we propose a natural generalization $\zdiv(X,\vec{p})$ (see Definition \ref{def:zdiv}) of Kapranov's motivic zeta function for a stable curve $X$ with $n$ marked points $\vec{p}$ that takes into account the behavior at the nodes and the marked points. The basic idea is to replace the symmetric power $X_d$ in the definition of Kapranov's zeta function by the fiber over $(X,\vec{p})$ in a quotient of Hassett's moduli space of weighted stable curves of type $(1^n,\epsilon^d)$ (as in \cite{Hassett}). In the case $n=0$, this space functions as a natural desingularization of the moduli space of effective divisors on $X$ (see \cite[Section 2]{MoellerUlirschWerner}). 
When $X$ is smooth and does not have marked points,
our coefficients equal the symmetric power, giving $Z_{\text{div}}(X,t) = Z_{\text{mot}}(X,t)$. 
Our main result is the following Theorem~\ref{thm:main}.

\begin{maintheorem}
\label{thm:main}
Let $(X,\vec{p})$ be a stable quasiprojective curve over $k$ with $n$ marked points $\vec{p}$. Then $\zdiv(X, \vec{p};t)$ is rational over $K_0(\text{Var} / k)$. Moreover, if $G = (E,V)$ is the dual graph of $X$, then
$$
\zdiv\big(X,\vec{p};t\big) = \left(\frac{1-\LL t}{1-\LL t-t+t^2}\right)^{|E|+n} (1-t)^{2 |E|+n} \prod_{v \in V}  \zmot\big(\widetilde{X}_v;t\big),
$$
where $\widetilde{X}_v$ is the normalization of the component of $X$ corresponding to the vertex $v \in V$.
\end{maintheorem}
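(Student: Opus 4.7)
The plan is to stratify the fiber of the relevant quotient of Hassett's moduli space over $(X,\vec{p})$ by the distribution of the $d$ unordered weight-$\epsilon$ points among the smooth unmarked locus, the nodes, and the marked points of $X$, and then to factor the resulting generating function into independent local contributions. First I would unpack $\zdiv$: its coefficient at $t^d$ is the class of the fiber $F_d(X,\vec{p}) \subset \overline{M}_{g,(1^n,\epsilon^d)}/S_d$, and a point of $F_d$ is a weighted stable curve $C$ equipped with a stabilization $C \to X$. The structure of $C$ is encoded by a partition $d = d_\circ + \sum_{e \in E} d_e + \sum_{p \in \vec{p}} d_p$, a distribution of $d_\circ$ weight-$\epsilon$ points over the smooth unmarked loci $\widetilde{X}_v^\circ := \widetilde{X}_v \setminus \{\text{preimages of nodes and marks}\}$, and, for each node or mark with positive $d_\bullet$, a chain of $\mathbb{P}^1$s sprouted there that carries $d_\bullet$ weight-$\epsilon$ points. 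Weighted stability on a rational component with two special points forces each chain component to carry at least one weight-$\epsilon$ point.

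Next, because distinct chains and distinct smooth loci are pairwise disjoint in $C$, each stratum of $F_d$ is a product of local moduli, and by multiplicativity of classes in $K_0(\mathrm{Var}/k)$ the generating function factors as
\[
\zdiv(X, \vec{p}; t) \;=\; \prod_{v \in V} \zmot\!\left(\widetilde{X}_v^\circ;\, t\right)\cdot \prod_{e \in E} Z_{\mathrm{nd}}(t) \cdot \prod_{p \in \vec{p}} Z_{\mathrm{mk}}(t),
\]
for universal local power series $Z_{\mathrm{nd}}, Z_{\mathrm{mk}}$ independent of $X$. From the perspective of a single chain component both local models give a $\mathbb{P}^1$ with exactly two distinguished points (two nodes at a chain resolving a node of $X$, or one node and the weight-$1$ point $p$ at the tip of a sprouted chain at a mark), so $Z_{\mathrm{nd}} = Z_{\mathrm{mk}}$. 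By multiplicativity of $\zmot$ under disjoint union we have $\zmot(\widetilde{X}_v;t) = \zmot(\widetilde{X}_v^\circ;t)/(1-t)^{k_v}$, where $k_v$ is the number of node- and mark-preimages on $\widetilde{X}_v$, and since $\sum_v k_v = 2|E|+n$ the factor $\prod_v \zmot(\widetilde{X}_v^\circ;t)$ rewrites as $(1-t)^{2|E|+n}\prod_v \zmot(\widetilde{X}_v;t)$, matching the right-hand side of the claim up to identification of the common local factor.

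The main obstacle is showing that this local factor equals $(1-\LL t)/(1 - \LL t - t + t^2)$. Each chain component of size $d_i \geq 1$ has automorphism group $\mathbb{G}_m$ fixing its two distinguished points, and a direct computation --- using, e.g., the identification $\mathrm{Sym}^{d_i}(\mathbb{G}_m) \cong \mathbb{A}^{d_i-1}\times \mathbb{G}_m$ and the weighted $\mathbb{G}_m$-action --- identifies $[\mathrm{Sym}^{d_i}(\mathbb{G}_m)/\mathbb{G}_m]$ as a polynomial in $\LL$. Summing over chain lengths $k \geq 0$ and over compositions $(d_1,\ldots,d_k)$ of $d_\bullet$ into positive parts then produces the local generating function as a rational function. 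The quadratic denominator $1 - (\LL+1)t + t^2$ suggests that the sum is most naturally organized as a $2\times 2$ transfer-matrix expression tracking local states at each chain boundary; care must also be taken with automorphisms of symmetric chain configurations and with self-nodes of $X$, where an involution swaps the two branches meeting at the node. Once this local calculation is secured, the theorem follows by combining it with the product factorization and the scissor-type rearrangement above.
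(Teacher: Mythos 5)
Your global strategy is the same as the paper's: stratify by how the $d$ light points distribute over the punctured components and over chains of rational bubbles sprouting at the nodes and marked points (the paper's ``dual stable pairs''), factor the generating series into one local factor per edge and per mark times $\prod_v \zmot(\widetilde X_v^\circ;t)$, and trade the punctures for factors of $(1-t)$ via the symmetric-power relation (the paper does this through Propositions \ref{prop:loop}, \ref{prop:cross}, \ref{prop:closing} rather than a single product decomposition, but it is the same bookkeeping). The problem is that you stop exactly where the theorem actually lives: the identification of the local factor with $(1-\LL t)/(1-\LL t - t + t^2)$ is deferred (``once this local calculation is secured\dots''), with only a gesture toward a ``direct computation'' and a transfer-matrix organization. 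In the paper this step is carried by Lemma \ref{lemma:strata} together with Propositions \ref{prop:gm} and \ref{prop:marked}: each exceptional component carrying $m\ge 1$ light points is asserted to contribute the class $[\G_{m-1}]$, i.e.\ the $(m-1)$-st symmetric power of $\G$, so the per-chain-component series is $\sum_{m\ge 1}[\G_{m-1}]t^m = t\,\zdiv(\G;t)=t(1-t)/(1-\LL t)$, and summing over compositions gives $1/(1-t\,\zdiv(\G;t))$, which is the claimed factor.

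Moreover, the local model you propose does not obviously reproduce this. You take the contribution of a chain component with $m$ points to be the quotient class $[\mathrm{Sym}^m(\G)/\G]$. Computed naively (coordinates $x^m+a_{m-1}x^{m-1}+\cdots+a_0$, $a_0\neq 0$, with $\G$ acting with weights $1,\dots,m$), this coarse quotient is $\mathbb{A}^{m-1}/\mu_m$, whose class is $\LL^{m-1}$; already for $m=2$ the quotient is $\mathbb{A}^1$ (invariant $a_1^2/a_0$), of class $\LL$, not $[\G_1]=\LL-1$. Feeding $c_m=\LL^{m-1}$ into your composition sum yields a local factor $\tfrac{1-\LL t}{1-\LL t - t}$ per edge and per mark, which is \emph{not} the factor $\tfrac{1-\LL t}{1-\LL t-t+t^2}$ in the statement. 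So either you must justify the paper's stratum description $[\G_{m-1}]$ (Lemma \ref{lemma:strata}) rather than the naive $\G$-quotient of $\mathrm{Sym}^m(\G)$, or you must explain how the coarse structure of $\Div^+_{(G',D)}$ differs from that quotient; as written, completing your sketch along the route you indicate would produce a different formula, and this quantitative tension is not acknowledged. (Your worry about extra symmetries at self-nodes, by contrast, is harmless: an involution swapping the two branches is an automorphism of $(X,\vec p)$ itself and does not act on the fiber over the fixed point of $\calMbar_{g,n}$; the paper handles self-nodes by Proposition \ref{prop:loop}, converting the node into one marked point and one puncture.)
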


In \cite{BejleriRanganathanVakil} Bejleri, Ranganathan, and Vakil define a motivic Hilbert zeta function $Z_{\Hilb}(X;t)$, where the coefficients are given by Hilbert schemes of points on a variety $X$.
Their zeta function is sensitive to the singularities of $X$, while also agreeing with the usual motivic zeta function when $X$ is smooth.
They show that the motivic Hilbert zeta function of a reduced curve is rational. In contrast to our divisorial zeta function, the motivic Hilbert zeta function in \cite{BejleriRanganathanVakil} does not take into account marked points. Using \cite[Lemma 2.1, Corollary 2.2, and Proposition 6.1]{BejleriRanganathanVakil}, one can calculate that, for a nodal quasiprojective curve $X$ with dual graph $G=(V,E)$, we have 
\begin{equation}\label{eq_Hilbertzeta}
    Z_{\Hilb}\big(X;t\big)=\left({1-t+\LL t^2}\right)^{\vert E\vert}\cdot \prod_{v\in V} Z_{\text{mot}}\big(\widetilde{X}_v;t\big) \ .
\end{equation}

It is instructive to compare our formula in Theorem \ref{thm:main} as well as formula \eqref{eq_Hilbertzeta} for the Hilbert motivic zeta function with the formula for the usual Kapranov motivic zeta function $Z_{\text{mot}}(X;t)$. Using \cite[Chapter 7, Proposition 1.1.7]{ChambertLoirNicaiseSebag} (which is also stated as Lemma \ref{lemma:symmetricpower} below) one may calculate that
\begin{equation*}
    Z_{\text{mot}}\big(X;t\big)=(1-t)^{\vert E\vert}\cdot \prod_{v\in V} Z_{\text{mot}}\big(\widetilde{X}_v;t\big) \ .
\end{equation*}
While $Z_{\text{mot}}\big(X;t\big)$ appears to be insensitive to the nodal singularities of $X$, both $\zdiv\big(X;t\big)$ and $Z_{\Hilb}\big(X;t\big)$ "see" the nodes by adding extra components. 

Kapranov's motivic zeta function is known to be irrational for many surfaces. Let $X$ be a smooth projective connected surface. In \cite{LarsenLunts_rationality} Larsen and Lunts prove that $X$ is only pointwise rational when $X$ has Kodaira dimension $-1$ (over $\CC$) and in \cite{LarsenLunts_motivicmeasures} they show (over any field) that $Z_{\text{mot}}(X;t)$ is not pointwise rational when the Kodaira dimension of $X$ is $\geq 2$. 

\subsection*{Acknowledgements}
We thank Dori Bejleri, who also pointed out an inaccuracy in formula~\eqref{eq_Hilbertzeta}, Dhruv Ranganathan and Bernd Sturmfels for several remarks and interesting questions. Work for this paper has been done while the first author visited University of Warwick and Goethe-Universit\"{a}t Frankfurt. She thanks both universities for their hospitality and providing a stimulating working environment.
This material is based upon work supported by the National Science Foundation Graduate Research Fellowship Program under Grant No. DGE 1752814. Any opinions, findings, and
conclusions or recommendations expressed in this material are those of the authors and do not
necessarily reflect the views of the National Science Foundation.
This project  has  received  funding  from  the  European Union's Horizon 2020 research and innovation programme  under the Marie-Sk\l odowska-Curie Grant Agreement No. 793039. \includegraphics[height=1.7ex]{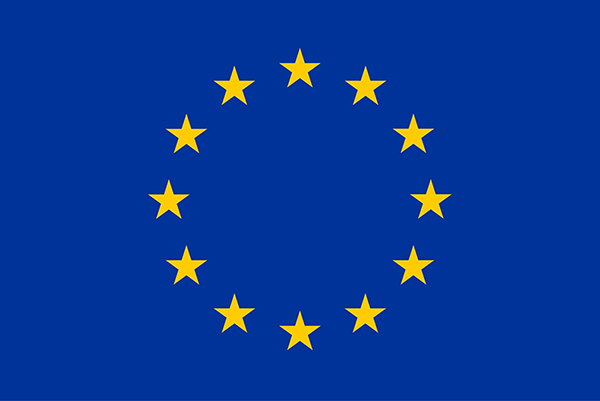} 
 We also acknowledge support from the LOEWE-Schwerpunkt ``Uniformisierte Strukturen in Arithmetik und Geometrie''.

\section{Effective divisors on pointed stable curves}

Let $k$ be an algebraically closed field of characteristic $0$ and let $g,n\geq 0$ such that $2g-2+n>0$.

\begin{definition}
Define a category $\calDivbar_{g,n,d}$ fibered in groupoids over schemes, whose objects are tuples $(\pi\colon X'\rightarrow S,\vec{p}', D)$ consisting of the following data:
\begin{enumerate}[(i)]
\item $\pi\colon X'\rightarrow S$ is a flat and proper morphism of connected nodal curves;
\item $\vec{p}'$ is an ordered collection of sections $p_1',\ldots, p_n'\colon S\rightarrow X$ that do not meet the nodes in each fiber $X'_s$ of $\pi$; and
\item $D$ is a relative effective Cartier divisor of degree $d$ on $X'$ over $S$, whose support does not intersect the nodes and sections in each fiber $X'_s$ of $X'$ over $S$.
\end{enumerate}
We also require that the twisted canonical divisor 
\begin{equation*}
    K_\pi+\epsilon D +p'_1 +\ldots + p'_n
\end{equation*} is $\pi$-relatively ample, where $\epsilon=\frac{1}{d}>0$. 
\end{definition} 

Denote by $\calMbar_{g,1^n,\epsilon^d,}$ the moduli space of weighted stable curves of genus $g$ with $n$ marked points of weight one and $d$ marked points of weight $\epsilon=\frac{1}{d}>0$ in the sense of \cite{Hassett}. There is a natural operation of $S_d$ on $\calMbar_{g,1^n,\epsilon^d}$ that permutes the $d$ marked points of weight $\epsilon$. Then $\calDivbar_{g,n,d}$ is naturally equivalent to the relative coarse moduli space of 
\begin{equation*}
    \big[\calMbar_{g,1^n,\epsilon^d}/S_d\big]
\end{equation*}
over $\calMbar_{g,n}$ in the sense of \cite[Theorem 3.1]{AbramovichOlssonVistoli}. So, in particular, it is a smooth and proper Deligne-Mumford stack with a projective coarse moduli space. There is a natural forgetful morphism $\calDivbar_{g,n,d}\rightarrow \calMbar_{g,n}$ and we write $\calDiv_{g,n,d}$ for its restriction to $\calM_{g,n}$. The complement of $\calDiv_{g,n,d}$ in $\calDivbar_{g,n,d}$ has (stack-theoretically) normal crossings. 

\begin{remark}
For $n=0$, the moduli space $\calDivbar_{g,d}$ was constructed in \cite[Section 2]{MoellerUlirschWerner}. It is also equal to a special case of the moduli space of stable quotients, as defined in \cite[Section~4]{MarianOpreaPandharipande}. 
\end{remark}

Let $(X,\vec{p})=(X,p_1,\ldots, p_n)$ be a stable marked curve of genus $g$ given by a morphism $\Spec(k) \rightarrow \calMbar_{g,n}$. The fiber over this point is given by
$$
\Div^+_d\big(X,\vec{p}\big):=\calDivbar_{g,n,d} \times_{\calMbar_{g,n}} \Spec(k).
$$
This describes tuples $(X',\vec{p}',D)$ consisting of
\begin{enumerate}[(i)]
    \item a nodal curve $X'$;
    \item a collection of marked points $\vec{p}'=(p_1',\ldots,p_n')$ of $X'$ such that $p_1',\ldots,p_n'$ do not meet the nodes of $X'$ and the stabilization of $(X',\vec{p}')$ is isomorphic to $(X,\vec{p})$;
    \item a relative effective Cartier divisor $D$ of degree $d$ on $X'$ whose support does not intersect the nodes or marked points of $X'$.
\end{enumerate}
We also require that the twisted canonical divisor
$$
K + \epsilon D + p_1' + \cdots + p_n'
$$
is ample, where $\epsilon = \frac{1}{d} > 0$. If $X$ is smooth and does not have marked points, the space  $\Div_{d}^+(X)$ gives effective divisors on $X$ and is the $d$-th symmetric power $X_d$ (see \cite[Theorem 3.13]{Milne_Jacobian}).

Suppose now that $X$ is quasiprojective. We choose a compactification $\overline{X}$ of $X$ by smooth points and define $\Div_d^+(X,\vec{p})$ to be the open locus in $\Div_d^+(\overline{X},\vec{p})$ where the support of $D$ does not intersect the preimage of the boundary $\overline{X}-X$ in $X'$. This does not depend on the choice of $\overline{X}$.

Now, we describe the strata of $\Div^+_d(X,\vec{p})$ as in \cite{ourpaper}. We associate to $(X',\vec{p}',D)$ a dual stable pair $\big(G', \text{mdeg}(D)\big)$ as follows: The graph $G'$ is the dual graph of $(X',\vec{p}')$, where the vertices $v$ of $G'$ each correspond to a component $X'_v$ of $X'$. 
For a node between components $X'_v$ and $X'_{v'}$ of $X$, there is an edge between vertices $v$ and $v'$ of $G'$. For a marked point in a component $X_{v}$ we add a leg at $v$.
The restriction of $D$ to each component $X'_v$ defines a divisor $\text{mdeg}(D) = \sum_{v} \deg(D|_{X'_v}) \cdot v$ on $G'$. The graph $G'$ is a subdivision of $G$, the dual graph of $(X, \vec{p})$. The pair $(G',D)$ is a stable pair over $G$, meaning that the degree of $D$ is at least 1 on all exceptional vertices of $G'$. Denote by
$\Delta(G,d)$ the collection of all stable pairs of degree $d$ over $G$.

One can generalize the results in \cite[Section 3.2]{ourpaper} to show that the strata of  $\Div^+_d(X,\vec{p})$ are precisely the locally closed subsets on which the dual pairs are constant.  Denote by  $\Div_{(G',D)}^+(X,\vec{p})$ the locus of points in $\Div^+_d(X,\vec{p})$ whose dual pair is $(G',D)$.

\begin{figure}
\centering
\begin{subfigure}{.25\textwidth}
  \centering
  \includegraphics[height=1.6 in]{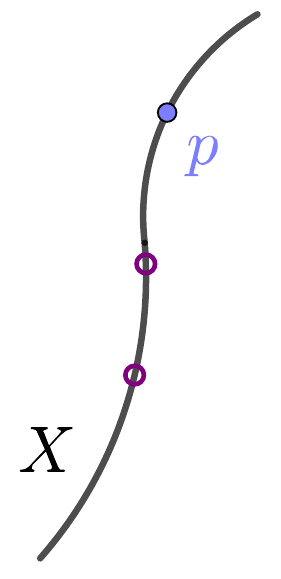}
\end{subfigure}%
\begin{subfigure}{.25\textwidth}
  \centering
  \includegraphics[height=1.6 in]{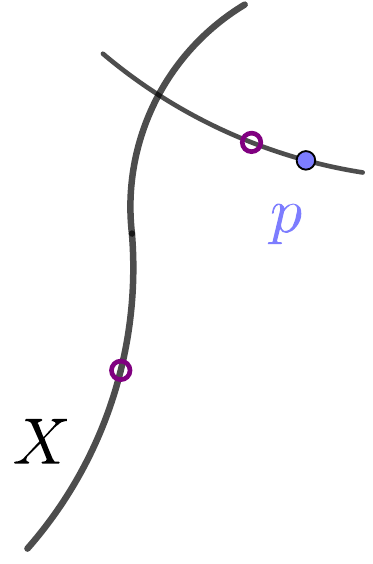}
\end{subfigure}%
\begin{subfigure}{.25\textwidth}
  \centering
  \includegraphics[height=1.6 in]{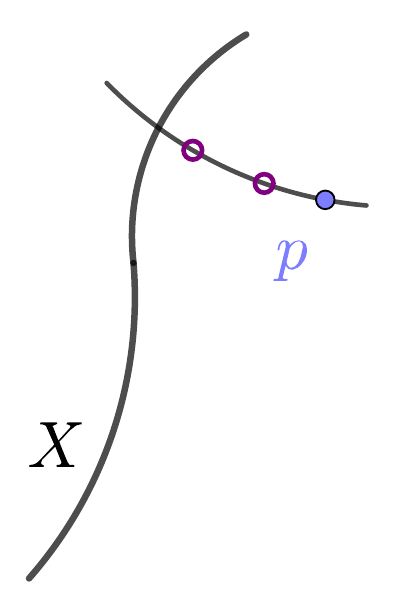}
\end{subfigure}%
\begin{subfigure}{.25\textwidth}
  \centering
  \includegraphics[height=1.6 in]{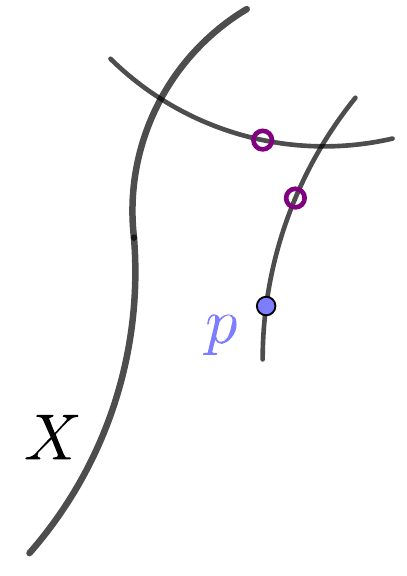}
\end{subfigure}%
\caption{Let $(X,p)$ be a smooth curve with genus $g \geq 1$ and one marked point $p$. In this case, $\Div^+_2(X,p)$ has four strata, corresponding to the pictured combinatorial types of marked stable curves and divisors.}
\label{fig:strata}
\end{figure}

\begin{figure}
\centering
\begin{subfigure}{.142\textwidth}
  \centering
  \includegraphics[height=1.6 in]{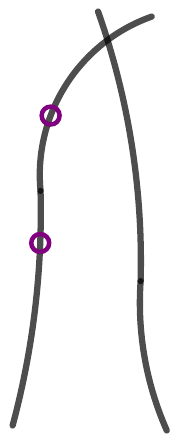}
\end{subfigure}%
\begin{subfigure}{.142\textwidth}
  \centering
  \includegraphics[height=1.6 in]{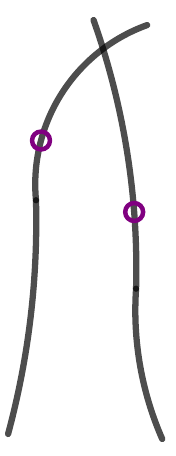}
\end{subfigure}%
\begin{subfigure}{.142\textwidth}
  \centering
  \includegraphics[height=1.6 in]{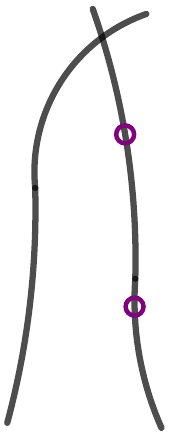}
\end{subfigure}%
\begin{subfigure}{.142\textwidth}
  \centering
  \includegraphics[height=1.6 in]{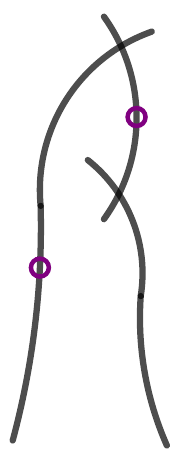}
\end{subfigure}%
\begin{subfigure}{.142\textwidth}
  \centering
  \includegraphics[height=1.6 in]{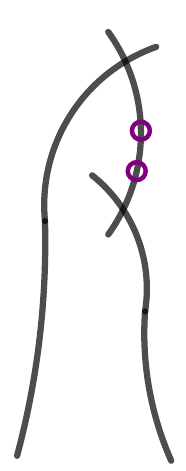}
\end{subfigure}%
\begin{subfigure}{.142\textwidth}
  \centering
  \includegraphics[height=1.6 in]{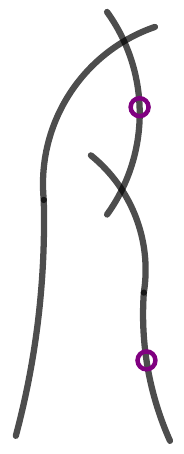}
\end{subfigure}%
\begin{subfigure}{.142\textwidth}
  \centering
  \includegraphics[height=1.6 in]{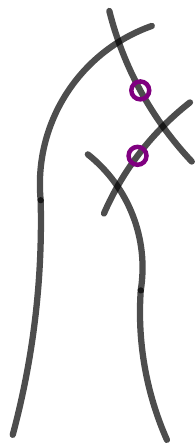}
\end{subfigure}%
\caption{Let $X$ be a curve with two smooth components each having genus larger than one meeting in a node. In this case, $\Div^+_2(X)$ has seven strata, corresponding to the pictured combinatorial types of stable curves and divisors.}
\label{fig:strata2}
\end{figure}

\section{Divisorial motivic zeta function}

\begin{definition}
\label{def:zdiv}
 Let $(X,\vec{p})$ be a stable marked quasiprojective curve over $k$ of genus $g$ with $n$ marked points. The \emph{divisorial motivic zeta function of $(X,\vec{p})$} is defined to be
$$
Z_{\text{div}}\big(X,\vec{p};t\big) = \sum_{d \geq 0}\big[\Div^+_d(X,\vec{p})\big]t^d \ \ \ \ \ \ \in\ \  1 + t \cdot K_0(\text{Var} / k)\llbracket t\rrbracket.
$$
\end{definition}

 We begin by breaking up the classes $\big[\Div^+_d(X,\vec{p})\big]$ along their strata using the following lemma. 

\begin{lemma}[{{\cite[Chapter 2, Lemma 1.3.3]{ChambertLoirNicaiseSebag}}}]
\label{lemma:basic}
Suppose that $Y$ is a variety over $k$ and we have a decomposition $Y = Y_1 \sqcup \cdots \sqcup Y_r $ where all $Y_i$ are locally closed subvarieties of $Y$. Then:
$$\big[Y\big] = \big[Y_1\big] + \cdots + \big[Y_r\big].$$
\end{lemma}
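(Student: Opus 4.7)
The plan is to induct on $r$, with the heart of the argument being a ``generalized scissor'' identity: for any single locally closed subvariety $Y_1 \subset Y$, one has $[Y] = [Y_1] + [Y \setminus Y_1]$ in $K_0(\mathrm{Var}/k)$. The defining relation of the Grothendieck ring only applies to a closed subvariety and its open complement, so the first task is to bootstrap that relation to the locally closed setting; the rest is a bookkeeping induction.

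To establish the scissor identity, I would write $Y_1 = U \cap C$ with $U$ open and $C$ closed in $Y$, so that $Y_1$ is closed in the open subvariety $U$. Applying the defining relation to the open-closed decomposition $Y = U \sqcup (Y \setminus U)$ gives $[Y] = [U] + [Y \setminus U]$, and applying it again inside $U$ to the closed-open decomposition $U = Y_1 \sqcup (U \setminus Y_1)$ gives $[U] = [Y_1] + [U \setminus Y_1]$. Adding these yields
\begin{equation*}
    [Y] = [Y_1] + [U \setminus Y_1] + [Y \setminus U].
\end{equation*}
On the other hand, $Y \setminus U$ is closed in $Y \setminus Y_1$ with open complement $U \setminus Y_1$, so one more application of the defining relation produces $[Y \setminus Y_1] = [U \setminus Y_1] + [Y \setminus U]$. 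Substituting gives $[Y] = [Y_1] + [Y \setminus Y_1]$.

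For the main induction, the base case $r = 1$ is immediate. For $r \geq 2$, apply the scissor identity to peel off $Y_1$, obtaining $[Y] = [Y_1] + [Y \setminus Y_1]$. The pieces $Y_2, \ldots, Y_r$ are locally closed in $Y$ and disjoint from $Y_1$, hence they are locally closed in $Y \setminus Y_1$ and give a decomposition of $Y \setminus Y_1$ into $r - 1$ locally closed subvarieties. The inductive hypothesis then yields $[Y \setminus Y_1] = [Y_2] + \cdots + [Y_r]$, and the desired formula follows. There is no genuine obstacle in the argument: all the content is in the scissor identity, which itself is just two applications of the defining relation.
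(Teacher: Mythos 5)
The paper does not actually prove this lemma---it is imported verbatim from Chambert-Loir--Nicaise--Sebag---so the only question is whether your argument stands on its own, and as written it has a genuine gap. Both your ``generalized scissor'' identity and your induction manipulate the class $[Y\setminus Y_1]$, but $Y\setminus Y_1$ is in general only a constructible subset of $Y$, not a locally closed one, so it is not a subvariety and the symbol $[Y\setminus Y_1]$ is not even defined in $K_0(\mathrm{Var}/k)$; assigning a well-defined, additive class to constructible sets is essentially the content of the lemma you are trying to prove. Concretely, take $Y=\mathbb{A}^2$, $Y_1=\{y=0,\ x\neq 0\}$, $Y_2=\{(0,0)\}$, $Y_3=\{y\neq 0\}$: this is a legitimate decomposition into locally closed pieces, but $Y\setminus Y_1=\{(0,0)\}\cup\{y\neq 0\}$ is dense and not open, hence not locally closed. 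So the third application of the defining relation in your scissor argument (the one applied to $Y\setminus Y_1$) is not licensed, and in the induction step the inductive hypothesis, which is stated for varieties, cannot be applied to $Y\setminus Y_1$. (Your scissor identity is fine in the special case where $Y\setminus Y_1$ happens to be locally closed, e.g.\ when $r=2$, but that is not the situation after the first peeling in general.)

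The argument can be repaired without ever leaving the world of honest varieties by cutting along the closure of $Y_1$ rather than peeling off $Y_1$ itself. Set $Z=\overline{Y_1}$ and $W=Z\setminus Y_1$; since $Y_1$ is locally closed it is open in $Z$, so $W$ is closed in $Z$ and two applications of the defining relation give $[Y]=[Z]+[Y\setminus Z]$ and $[Z]=[Y_1]+[W]$. Both $W$ (closed in $Y$) and $Y\setminus Z$ (open in $Y$) are varieties, and for $i\geq 2$ the sets $Y_i\cap W$ and $Y_i\setminus Z$ are locally closed and give decompositions of $W$ and of $Y\setminus Z$ into $r-1$ pieces (note $Y_i\cap Z=Y_i\cap W$ because $Y_i$ is disjoint from $Y_1$), so the inductive hypothesis applies to each of these two varieties; finally, the defining relation applied to the variety $Y_i$ with closed subset $Y_i\cap W$ and open complement $Y_i\setminus Z$ gives $[Y_i]=[Y_i\cap W]+[Y_i\setminus Z]$, and summing everything yields $[Y]=[Y_1]+\cdots+[Y_r]$. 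This route (or, equivalently, first setting up well-defined classes of constructible subsets, as the cited reference does) closes the gap; your overall strategy of induction plus a scissor-type relation is sound, but the scissor relation must only ever be invoked for closed or open subvarieties of an actual variety.
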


\begin{lemma}
\label{lemma:sum}
Let $(X,\vec{p})$ be a stable marked quasiprojective curve over $k$. 
$$
\zdiv\big(X,\vec{p};t\big) 
     = \sum_{d \geq 0} t^d \sum_{\Delta(G,d)}\big[\Div_{(G',D)}(X,\vec{p})\big],
$$
where the second sum is over all stable pairs $(G',D)$ of degree $d$ over the dual graph $G$ of $(X, \vec{p})$. 
\end{lemma}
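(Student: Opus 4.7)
The plan is to reduce the statement directly to the definition of $\zdiv$ combined with the stratification of $\Div^+_d(X,\vec{p})$ by dual pair and the additivity of the Grothendieck class (Lemma~\ref{lemma:basic}).

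First, I would unfold the definition from Definition~\ref{def:zdiv}, writing
\begin{equation*}
Z_{\text{div}}\big(X,\vec{p};t\big) \;=\; \sum_{d\geq 0}\big[\Div^+_d(X,\vec{p})\big]\,t^d.
\end{equation*}
Fix $d\geq 0$. Using the discussion immediately preceding the lemma (the generalization of \cite[Section~3.2]{ourpaper}), the assignment $(X',\vec{p}',D)\mapsto (G',\mdeg(D))$ partitions $\Div^+_d(X,\vec{p})$ into the subsets $\Div_{(G',D)}^+(X,\vec{p})$ indexed by $\Delta(G,d)$, and each of these subsets is locally closed.

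Next I would observe that $\Delta(G,d)$ is a \emph{finite} indexing set: the underlying dual graph $G'$ is obtained from $G$ by subdividing each edge into finitely many pieces, and the stability condition (degree at least one on every exceptional vertex) together with $\sum_{v}D(v)=d$ bounds both the number of allowed subdivisions and the number of multidegrees $D$. Hence we have a finite decomposition of $\Div^+_d(X,\vec{p})$ into locally closed subvarieties, to which Lemma~\ref{lemma:basic} applies, yielding
\begin{equation*}
\big[\Div^+_d(X,\vec{p})\big] \;=\; \sum_{(G',D)\in \Delta(G,d)} \big[\Div_{(G',D)}^+(X,\vec{p})\big].
\end{equation*}

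Finally, I would substitute this identity back into the generating function to obtain the claimed double sum. The only substantive input is the locally closed nature of the stratification, which is inherited from the normal crossings stratification of $\calDivbar_{g,n,d}$ over $\calMbar_{g,n}$ noted earlier; everything else is a bookkeeping application of the scissor relation. Thus I do not anticipate any real obstacle in this argument — the work has already been absorbed into the existence and local-closedness of the strata $\Div_{(G',D)}^+(X,\vec{p})$.
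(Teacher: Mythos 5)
Your argument is correct and matches the paper's proof, which likewise deduces the lemma from the stratification of $\Div^+_d(X,\vec{p})$ by dual pairs together with Lemma~\ref{lemma:basic}; you simply spell out the finiteness of $\Delta(G,d)$ and the substitution into the generating function, which the paper leaves implicit.
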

\begin{proof}
This follows from the description of the strata of $\Div^+_d(X,\vec{p})$ and Lemma \ref{lemma:basic}.
\end{proof}

\begin{notation}
Given a stable marked quasiprojective curve $(X,\vec{p})$ and points ${q_1, \ldots, q_{m} \in X}$, write 
$$
\big(X,\vec{p},-\vec{q}\big) := \big(X\backslash \{q_1, \ldots, q_{m}\},\{p_1,\ldots,p_n\} \backslash \{q_1, \ldots, q_{m}\}\big) .
$$
Given a connected component $X_v$ of $X$, we denote the non-special locus of $X_v$ by 
$$X_v^{(0)}:=\big(X_v,-\vec{p},-\text{Sing}(X_v)\big).$$
\end{notation}

We now describe the class of $\Div_{(G',D)}^+(X,\vec{p})$ in the Grothendieck ring. 

\begin{lemma}
\label{lemma:strata}
Let $(X,\vec{p})$ be a stable marked quasiprojective curve, and let $(G',D)$ be a stable pair such that the stabilization of $G'$ is equal to the dual graph $G$ of $X$. Then:
$$
\big[\Div_{(G',D)}(X,\vec{p})\big] = \prod_{v \in G}\big[(X_v^{(0)})_{D(v)}\big] \prod_{v' \in G' \backslash G}\big[\G_{D(v')-1}\big],
$$
where $\G$ denotes the one-dimensional algebraic torus $\A^1-\{0\}$ over $k$.
\end{lemma}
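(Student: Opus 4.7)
My approach is to decompose the stratum $\Div_{(G',D)}(X,\vec p)$ as a Cartesian product of moduli, one factor for each vertex of $G'$. A triple $(X',\vec p', D)$ of combinatorial type $(G',D)$ is determined by the restrictions of $D$ to the components $X'_w$ of $X'$: the gluing combinatorics is fixed by $G'$, and $D$ is disjoint from all nodes and marked points so the components do not interact. Thus the class of the stratum should factor accordingly, and the problem reduces to identifying the contribution of each vertex.

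For an original vertex $v \in G$, the component $X'_v$ is canonically the component $X_v$ of $X$ via the stabilization map, with no nontrivial automorphisms relative to $(X,\vec p)$. The restriction $D|_{X'_v}$ is an effective Cartier divisor of degree $D(v)$ supported in the non-special locus $X_v^{(0)}$, giving a contribution of $(X_v^{(0)})_{D(v)}$. For an exceptional vertex $v' \in G' \setminus G$, the component $X'_{v'}$ is a copy of $\PP^1$ carrying two distinguished \emph{special points}---two nodes if $v'$ is interior to a subdivided edge of $G$, or one node and one marked point if $v'$ is a bristle endpoint---and $D|_{X'_{v'}}$ is an effective Cartier divisor of degree $D(v')$ supported away from them. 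Fixing the two special points to $0$ and $\infty$ identifies the residual stabilizer in $\mathrm{PGL}_2$ with $\G$, and an isomorphism over $(X,\vec p)$ is free to rescale $X'_{v'}$ by an element of this $\G$. So the contribution of $v'$ is the moduli of such divisors on $\G = \PP^1 \setminus \{0,\infty\}$ modulo the $\G$-action.

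The main obstacle is to show that this last contribution has class $[\G_{D(v')-1}]$ in $K_0(\mathrm{Var}/k)$. I would parametrize a degree-$d'$ effective divisor on $\G$ (with $d' := D(v')$) by the coefficients of its monic defining polynomial $f(x) = x^{d'} + a_1 x^{d'-1} + \cdots + a_{d'}$ with $a_{d'} \neq 0$, realizing the unquotiented space as $\A^{d'-1} \times \G$ on which $\G$ acts with explicit weights on the coefficients $a_i$. The task is then to identify the resulting quotient class with that of $\G_{d'-1}$; I expect this identification to reflect the specific structure of the Hassett moduli and the stable-quotient description referenced at the start of Section~2, rather than the naive GIT quotient, which is sensitive to stabilizers at non-free orbit points. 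Once the per-vertex contributions are established in this way, combining them via Lemma~\ref{lemma:basic} and the product decomposition of the stratum yields the stated formula.
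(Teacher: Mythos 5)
Your overall strategy coincides with the paper's (very terse) proof: decompose the stratum as a product over the vertices of $G'$, identify the factor at an original vertex $v\in G$ with $(X_v^{(0)})_{D(v)}$, and identify the factor at an exceptional vertex $v'$ with the moduli of degree-$D(v')$ divisors on $\G=\PP^1\setminus\{0,\infty\}$ modulo the residual $\G$-scaling. The problem is that the entire nontrivial content of the lemma is the claim that this last factor has class $[\G_{D(v')-1}]$, and this is exactly the step you do not carry out: you set it up as the quotient of $\G_{d'}$ (with $d'=D(v')$) by the $\G$-action and then say you ``expect'' the identification with $[\G_{d'-1}]$ to follow from the structure of the Hassett/stable-quotients moduli, without giving an argument. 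So the proposal proves the easy part of the lemma and defers its heart.

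Moreover, this gap is not a routine verification that you could fill in mechanically along the route you indicate. In your own coordinates $\G_{d'}\cong\A^{d'-1}\times\G$ (coefficients $a_1,\dots,a_{d'}$ with $a_{d'}\neq 0$, and $\G$ acting with weight $i$ on $a_i$), the orbit space is the weighted quotient $\A^{d'-1}/\mu_{d'}$, which has class $\LL^{d'-1}=[\A^{d'-1}]$ rather than $[\G_{d'-1}]=\LL^{d'-2}(\LL-1)$ once $d'\geq 2$: for instance, for $d'=2$ the invariant $a_1^2/a_2$ identifies the quotient with $\A^1$, the extra point relative to $[\G_1]$ coming from divisors such as $(1)+(-1)$ that are fixed by a nontrivial subgroup of $\G$. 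Such divisors are honest, pairwise non-isomorphic objects of the moduli problem, so to complete your argument you would have to explain why they do not contribute as you expect --- i.e.\ why the fiber of the relative coarse space along the exceptional locus is not the naive orbit space of isomorphism classes --- and nothing in your sketch (or, for that matter, in the paper's one-sentence assertion that exceptional components give points of $\G_{D(v')-1}$) supplies that input. As it stands, your proposal establishes the product decomposition and the smooth-component factors, but not the exceptional-vertex factor, which is precisely what the lemma asserts.
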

\begin{proof}
A point in $\Div_{(G',D)}^+(X,\vec{p})$ gives a divisor of degree $D(v)$ on $X_v^{(0)}$ for each $v \in G'$. On non-exceptional components, these are points in $(X_v^{(0)})_{D(v)}$. On exceptional components, these are points in~$\G_{D(v')-1}$. 
\end{proof}

We now prove a series of propositions which give us a way to iteratively relate the divisorial motivic  zeta function of a stable curve to the divisorial motivic zeta functions of its components. 


\begin{proposition}[Self-intersection]
\label{prop:loop}
Let $(X,\vec{p},q_1,q_2)$ be a stable quasiprojective curve with $n+2$ marked points and let $(X/_{q_1\sim q_2},\vec{p})$ be the curve with a nodal self-intersection obtained by gluing $q_1$ and $q_2$. Then:
$$
\zdiv\big(X,\vec{p},q_1,-q_2;t\big) = \zdiv\big(X/_{q_1\sim q_2},\vec{p};t\big).
$$
\end{proposition}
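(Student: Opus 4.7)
The plan is to expand both sides via Lemma \ref{lemma:sum} into sums over stable-pair strata and then construct a class-preserving bijection between the strata on the two sides. The geometric content is that a weight-one marking $q_1$ together with a deleted point $q_2$ on the same component plays the same role in the Hassett moduli as the self-node obtained by gluing $q_1$ and $q_2$.

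Let $v_0$ denote the component of $X$ containing $q_1$ and $q_2$. The dual graph $G^L$ of $(X,\vec{p},q_1)$ carries a $q_1$-leg at $v_0$, whereas the dual graph $G^R$ of $(X/_{q_1\sim q_2},\vec{p})$ has no $q_1$-leg but carries an extra loop at $v_0$. The bijection I propose takes a stable pair $(G'^L,D)$, in which the $q_1$-leg sits at the end of a (possibly trivial) chain of exceptional bubble-vertices $v_0,v'_1,\ldots,v'_k$, and produces $(G'^R,D)$ by deleting the $q_1$-leg and attaching a new edge from $v'_k$ (or from $v_0$ itself, if $k=0$) back to $v_0$, thereby closing the chain into a length-$(k+1)$ subdivision of the loop in $G^R$. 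The inverse cuts one of the two $v_0$-attachments of the (subdivided) loop and promotes the resulting free end to a $q_1$-leg. Both operations transparently respect the stability requirement $D(v'_i)\geq 1$ on every exceptional vertex and leave all other edges, legs and degree data unchanged.

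To verify that the bijection preserves classes in $K_0(\mathrm{Var}/k)$, I would compare the factors of Lemma \ref{lemma:strata} term by term. For vertices $v\neq v_0$ the factors $[(X_v^{(0)})_{D(v)}]$ are identical on the two sides. For $v_0$ itself the non-special locus is $X_{v_0}\setminus\{q_1,q_2,\ldots\}$ in both cases: on the left $q_1$ is removed as a marking while $q_2$ has been deleted from the curve, whereas on the right both $q_1$ and $q_2$ appear as preimages of the self-node in the normalization $\widetilde{X}_{v_0}$. For the $k$ new exceptional vertices, an exceptional $\mathbb{P}^1$ carrying the $q_1$-leg and one node has the same local moduli of weight-$\epsilon$ divisor points as an exceptional $\mathbb{P}^1$ carrying two nodes, since in either case the two ``heavy'' special points rigidify $\mathrm{Aut}(\mathbb{P}^1)$ down to the $\G$ fixing them, and the $D(v')$ weight-$\epsilon$ points live on this $\G$ modulo the residual action; Lemma \ref{lemma:strata} therefore assigns each such vertex the same class $[\G_{D(v')-1}]$.

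The hard part will be this final step: checking cleanly that replacing a node by a weight-one marking on an exceptional $\mathbb{P}^1$ does not alter the moduli of the weight-$\epsilon$ divisor points on it. Once this is in hand, the bijection is class-preserving stratum by stratum, and summing over $d$ and over all stable pairs via Lemma \ref{lemma:sum} immediately delivers the identity $\zdiv(X,\vec{p},q_1,-q_2;t)=\zdiv(X/_{q_1\sim q_2},\vec{p};t)$.
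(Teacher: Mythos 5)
Your proof is correct and follows essentially the same route as the paper: both expand the two sides via Lemmas \ref{lemma:sum} and \ref{lemma:strata} and then identify the strata, matching a chain of exceptional components ending in the $q_1$-marking with the correspondingly subdivided loop at the new node. The step you flag as the ``hard part'' is already contained in Lemma \ref{lemma:strata}, which assigns the class $[\G_{D(v')-1}]$ to every exceptional vertex regardless of whether its two special points are two nodes or a node and a weight-one marking.
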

\begin{proof}
Using Lemmas \ref{lemma:sum} and \ref{lemma:strata}, we have that 
$$
    \zdiv\big(X/_{q_1\sim q_2},\vec{p};t\big) = \sum_{d \geq 0} t^d  \sum_{j = 0}^{d} \big[\Div_j^+(X,\vec{p},-p,-q)\big]\sum_{\alpha \in \text{comp}(d-j)}\big[\G\big]_\alpha
    = \zdiv\big(X,\vec{p},q_1,-q_2;t\big),
$$
where we interpret $j$ to be the degree of the divisor restricted to $(X,\vec{p},-q_1,-q_2)$. The last sum is taken over all ordered ways to write the integer $d -j $ as a sum of positive integers, and $\big[\G\big]_\alpha := \big[\G_{\alpha_1-1}\big]\cdots \big[\G_{\alpha_l-1}\big]$. We see this because in both cases the strata are given by stable pairs whose exceptional components emanate from the point $q_1$ (see Figure \ref{fig:loop}).
\end{proof}

\begin{figure}
\centering
\begin{minipage}{.5\textwidth}
  \centering
  \includegraphics[height= 1.6in]{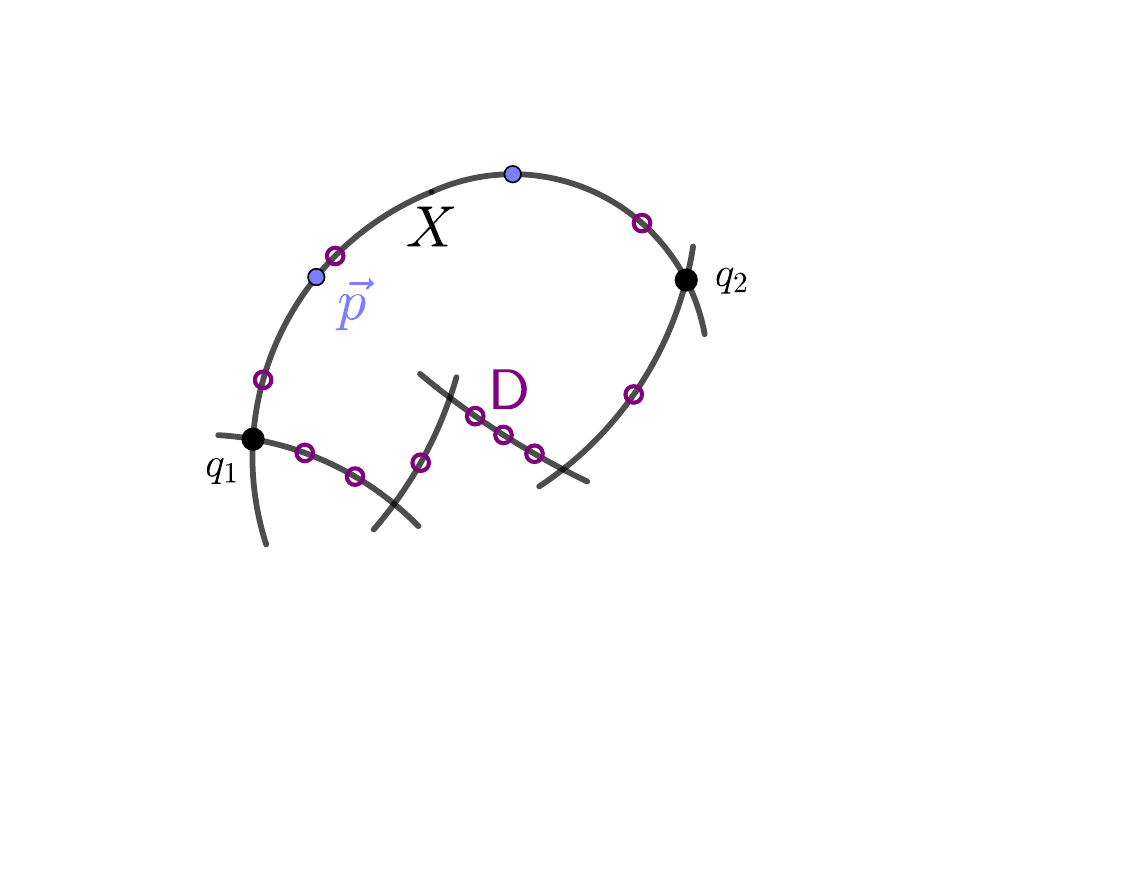}
  \captionof{figure}{A stable pair for Proposition \ref{prop:loop}}
  \label{fig:loop}
\end{minipage}%
\begin{minipage}{.5\textwidth}
  \centering
  \includegraphics[height= 1.6in]{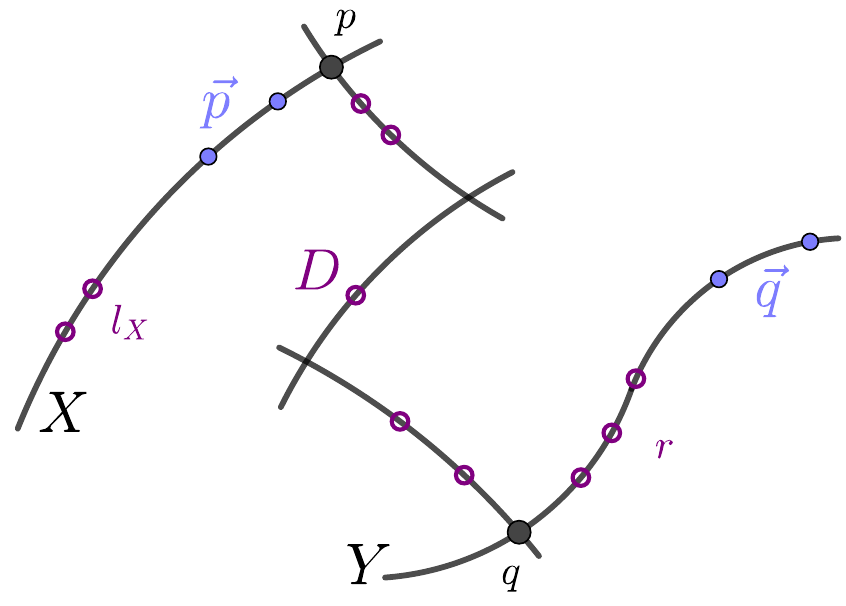}
  \captionof{figure}{A stable pair for Proposition \ref{prop:cross}}
  \label{fig:cross}
\end{minipage}
\end{figure}

\begin{proposition}[Intersection]
\label{prop:cross}
Let $(X,\vec{p},p)$ be a stable quasiprojective curve with $n+1$ marked points and let $(Y,\vec{q},q)$ be a stable quasiprojective curve with $m+1$ marked points. Denote by $X\sqcup_{p\sim q}Y$ the curve obtained by gluing $X$ to $Y$ along a node at the points $p,q$. Then:
$$
\zdiv\big(X\sqcup_{p\sim q}Y,\vec{p},\vec{q};t\big) = \zdiv\big(X,\vec{p},p;t\big)\cdot \zdiv\big(Y,\vec{q},-q;t\big).
$$
\end{proposition}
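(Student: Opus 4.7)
The plan is to follow the pattern of Proposition~\ref{prop:loop}: stratify both sides via Lemmas~\ref{lemma:sum} and \ref{lemma:strata}, then exhibit a stratum-by-stratum bijection that preserves motivic classes.

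First I would describe the two stratifications. Write $G_X$, $G_Y$ for the dual graphs of $(X,\vec p)$ and $(Y,\vec q)$, and let $v_X^p\in V(G_X)$, $v_Y^q\in V(G_Y)$ be the vertices corresponding to the components containing $p$ and $q$. The dual graph of $X\sqcup_{p\sim q}Y$ is then $G_X\sqcup G_Y$ joined by one additional edge $e$ between $v_X^p$ and $v_Y^q$, and any stable pair refining it decomposes uniquely as a stable refinement $(G'_X,D_X)$ of $G_X$ (with legs $\vec p$), a stable refinement $(G'_Y,D_Y)$ of $G_Y$ (with legs $\vec q$), and a chain of $\ell\geq 0$ exceptional vertices with positive degrees $\alpha=(\alpha_1,\ldots,\alpha_\ell)$ inserted along $e$. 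On the right-hand side, a stratum of $\zdiv(X,\vec p,p;t)$ is a stable refinement of $G_X$ equipped with one extra leg at $v_X^p$ for the marked point $p$, where this extra leg may be subdivided into a chain whose terminal vertex carries $p$; a stratum of $\zdiv(Y,\vec q,-q;t)$ is a stable refinement of $G_Y$, intersected with the open condition $q\notin\supp(D)$ imposed by the quasiprojective compactification convention.

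The bijection then sends a left-hand-side stratum $\bigl((G'_X,D_X),\alpha,(G'_Y,D_Y)\bigr)$ to the pair of strata on the right obtained by placing the chain $\alpha$ along the $p$-leg of $\zdiv(X,\vec p,p;t)$ and keeping $(G'_Y,D_Y)$ unchanged on the $Y$-side. To check that motivic classes are preserved I would apply Lemma~\ref{lemma:strata} vertex by vertex: at $v_X^p$ the non-special locus removes $p$ on both sides (as a node on the left, as a marked point on the right); at $v_Y^q$ the non-special locus removes $q$ on both sides (as a node on the left, by the open condition on the right); and each exceptional vertex contributes $[\G_{\alpha_i-1}]$ in both cases. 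Crucially, the terminal vertex of the $p$-leg chain---a $\mathbb{P}^1$ with one node and one marked point---is treated by Lemma~\ref{lemma:strata} by the same formula as an interior chain vertex (a $\mathbb{P}^1$ with two nodes), so the chain contributions match term by term.

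Summing the stratum classes on both sides and grouping by total degree in $t$ then yields the claimed identity. The main obstacle is the vertex-by-vertex comparison at $v_X^p$, $v_Y^q$, and at the terminal chain vertex; once those class identifications are in place, the rest of the argument is a formal comparison of generating series, entirely parallel to the proof of Proposition~\ref{prop:loop}.
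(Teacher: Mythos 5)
Your proposal is correct and follows essentially the same route as the paper: both stratify via Lemmas~\ref{lemma:sum} and \ref{lemma:strata}, attach the exceptional chain at the new node to the $X$-factor (matching it with chains along the $p$-leg in $\zdiv(X,\vec{p},p;t)$), and recognize the resulting sum as a product of the two generating series. Your explicit stratum-by-stratum bijection, including the observation that a terminal chain vertex carrying the marked point contributes the same class $[\G_{\alpha_i-1}]$ as an interior chain vertex, simply spells out the identification that the paper's proof asserts when it writes the inner sum as $\zdiv(X,\vec{p},p;t)$.
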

\begin{proof}
By Lemmas \ref{lemma:sum} and \ref{lemma:strata}, we have that the divisorial zeta function is
$$
\zdiv\big(X\sqcup_{p\sim q}Y,\vec{p},\vec{q};t\big)= \sum_{d \geq 0} t^d \sum_{r + l = d}\big[\Div_r^+(Y,\vec{q},-q)\big]
\left(
\sum_{l_x = 0}^l \big[\Div_{l_x}^+(X,\vec{p},-p)\big] \sum_{\alpha \in \text{comp}(l-l_x)}\big[\G\big]_\alpha
\right)
$$
where we interpret $r$ to be the degree of the divisor restricted to $Y$, $l$ to be the degree of the divisor restricted to $X$ and the exceptional components, and $l_x$ to be the degree of the divisor restricted to $X$ (See Figure \ref{fig:cross}). We observe that this is in fact a product of series:
$$
\zdiv\big(X\sqcup_{p\sim q}Y,\vec{p},\vec{q};t\big) =
\left(
\sum_{d \geq 0} t^d\big[\Div_d^+(Y,\vec{q},-q)\big]
\right)
\left(
\sum_{d \geq 0} t^d \sum_{d_x = 0}^d \big[\Div_{d_x}^+(X,\vec{p},-p)\big] \sum_{\alpha \in \text{comp}(d-d_x)}\big[\G\big]_\alpha
\right)
$$
On the left we have $\zdiv\big(Y,\vec{q},-q;t\big)$ and on the right we have  $\zdiv\big(X, \vec{p}, p;t\big)$.\
\end{proof}

\begin{proposition}[Closing]
\label{prop:closing}
Let $X$ be a smooth quasiprojective curve and let $p\in X$ be a point. Then,
$$
\zdiv\big(X,-p;t\big) = \zdiv(X;t)\cdot (1-t).
$$
\end{proposition}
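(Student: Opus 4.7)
The plan is to reduce this to the standard scissor behavior of Kapranov's motivic zeta function under deleting a smooth point, by first identifying $\zdiv$ with $\zmot$ on both sides. Since $X$ is smooth with no marked points, the discussion following Definition~\ref{def:zdiv} (via Milne's identification $\Div_d^+(X) = X_d$) gives $\zdiv(X;t) = \zmot(X;t)$. For the other side, observe that $X$ itself is a smooth compactification of $X\setminus\{p\}$ whose boundary $\{p\}$ is a smooth point, so by the paragraph defining $\Div_d^+$ for quasiprojective curves, $\Div_d^+(X,-p)$ is the open locus in $\Div_d^+(X) = X_d$ where the support of the divisor avoids $p$, i.e.\ exactly $(X\setminus\{p\})_d$. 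Hence $\zdiv(X,-p;t) = \zmot(X\setminus\{p\};t)$, and the proposition reduces to the classical identity
$$\zmot\bigl(X\setminus\{p\};t\bigr) \;=\; (1-t)\cdot \zmot(X;t).$$

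To establish this identity, I would stratify $X_d$ by the multiplicity $k$ with which a divisor $D \in X_d$ contains $p$. This gives a locally closed decomposition
$$X_d \;=\; \bigsqcup_{k=0}^{d}\, (X\setminus\{p\})_{d-k},$$
where the $k$-th stratum consists of divisors of the form $k\cdot p + D'$ with $D' \in (X\setminus\{p\})_{d-k}$; the map $D \mapsto D'$ is an isomorphism onto the stratum. By Lemma~\ref{lemma:basic} this yields $[X_d] = \sum_{k=0}^{d} [(X\setminus\{p\})_{d-k}]$ in $K_0(\mathrm{Var}/k)$. Multiplying by $t^d$ and summing over $d\geq 0$ recognizes $\zmot(X;t)$ as a Cauchy product
$$\zmot(X;t) \;=\; \Bigl(\sum_{k\geq 0} t^k\Bigr)\cdot \zmot\bigl(X\setminus\{p\};t\bigr) \;=\; \tfrac{1}{1-t}\cdot \zmot\bigl(X\setminus\{p\};t\bigr),$$
which rearranges to the desired identity and hence to the proposition.

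There is essentially no hard step here: the argument is a two-line scissor computation on symmetric powers. The only thing worth double-checking is the reduction to $\zmot$, namely the identification $\Div_d^+(X,-p) = (X\setminus\{p\})_d$, which is immediate from the compactification-based definition of $\Div_d^+$ once one notes that $X$ is itself a smooth compactification of $X\setminus\{p\}$. Note that this argument does not require any appeal to the later structural results on dual graphs; it is in fact the base case that will feed into the inductive proof of the main theorem via Propositions~\ref{prop:loop} and~\ref{prop:cross}.
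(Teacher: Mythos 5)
Your proof is correct and is essentially the paper's argument: both identify $\Div_d^+(X)$ and $\Div_d^+(X,-p)$ with the symmetric powers $X_d$ and $(X\setminus\{p\})_d$, derive the identity $[X_d]=\sum_{j=0}^{d}\big[(X\setminus\{p\})_j\big]$, and then sum the resulting geometric series to get the factor $\frac{1}{1-t}$. The only difference is that you prove that identity directly by stratifying $X_d$ by the multiplicity of $p$, whereas the paper obtains it by citing Lemma~\ref{lemma:symmetricpower} with $Y=\{p\}$ — the same scissor relation, just quoted rather than reproved.
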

First, we need the following lemma.
\begin{lemma}[{\cite[Chapter 7, Proposition 1.1.7]{ChambertLoirNicaiseSebag}}]\label{lemma:symmetricpower}
If $X$ is a quasiprojective variety, and $Y \hookrightarrow X$ is a closed subvariety with complement $U$, then
\begin{equation*}
   \big[X_n\big] = \sum_{i + j = n} \big[Y_i\big]\cdot\big[U_j\big].  
\end{equation*}
\end{lemma}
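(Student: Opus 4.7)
The plan is to prove the lemma by providing an explicit locally closed stratification of $X_n$ and then applying Lemma \ref{lemma:basic} together with the multiplicative structure on the Grothendieck ring.

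First I would stratify the ordered product $X^n$. Since $X = Y \sqcup U$ with $Y$ closed and $U$ open, for each subset $S \subseteq \{1,\ldots,n\}$ the subvariety
$$
W_S := \Big(\prod_{k \in S} Y\Big) \times \Big(\prod_{k \notin S} U\Big) \ \subseteq\ X^n
$$
is locally closed, and one obtains a disjoint decomposition $X^n = \bigsqcup_{S \subseteq [n]} W_S$ into locally closed subvarieties. This is essentially just $(Y \sqcup U)^n$ expanded coordinate by coordinate.

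Next I would analyze the $S_n$-action. The symmetric group permutes the coordinates, and hence permutes the strata according to $\sigma \cdot W_S = W_{\sigma(S)}$. In particular, all strata $W_S$ with a fixed cardinality $|S| = i$ lie in a single $S_n$-orbit, and the stabilizer of $W_{\{1,\ldots,i\}}$ is the Young subgroup $S_i \times S_{n-i}$. Grouping strata by $i$, one obtains an $S_n$-equivariant locally closed decomposition $X^n = \bigsqcup_{i=0}^n X^n_{(i)}$ where $X^n_{(i)} = \bigsqcup_{|S|=i} W_S$. Passing to the quotient by $S_n$, each open orbit $X^n_{(i)} / S_n$ is canonically identified with $W_{\{1,\ldots,i\}} / (S_i \times S_{n-i}) = (Y^i \times U^{n-i})/(S_i \times S_{n-i}) = Y_i \times U_{n-i}$, since the $S_i$- and $S_{n-i}$-actions commute and act independently on the two factors. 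This yields a decomposition
$$
X_n \ = \ \bigsqcup_{i=0}^{n} Y_i \times U_{n-i}
$$
into locally closed subvarieties.

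Finally, applying Lemma \ref{lemma:basic} to this decomposition and using that $[Y_i \times U_{n-i}] = [Y_i] \cdot [U_{n-i}]$ in $K_0(\mathrm{Var}/k)$ immediately gives the desired identity $[X_n] = \sum_{i+j=n} [Y_i] \cdot [U_j]$. The one place that requires care is verifying that the image of $X^n_{(i)}$ in $X_n$ is \emph{locally closed} (and not merely constructible) and that its scheme structure really agrees with $Y_i \times U_{n-i}$; this is the main technical point, and it follows from the fact that taking the quotient by a finite group commutes with passing to open and closed subschemes that are invariant under the group action, combined with the product decomposition $S_i \times S_{n-i}$ of the stabilizer.
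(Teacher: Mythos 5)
The paper does not actually prove this lemma; it is stated as a citation to Chambert-Loir--Nicaise--Sebag, so there is no in-paper argument to compare against. Your proof is correct and is the standard argument for this statement: stratify $X^n$ by recording which coordinates land in $Y$ versus $U$, observe that $S_n$ permutes the strata $W_S$ transitively among those with $|S|=i$ with stabilizer the Young subgroup $S_i\times S_{n-i}$, identify the quotient of each grouped stratum with $Y_i\times U_{n-i}$, and then conclude by scissor-additivity (Lemma~\ref{lemma:basic}) and multiplicativity of classes. The technical point you flag at the end is genuine and your sketch of how to handle it is the right one: the image of the $S_n$-invariant locally closed set $X^n_{(i)}$ under the finite (hence closed) quotient map is locally closed because one may pass to the $S_n$-invariant closure and use that finite-group quotients send invariant opens to opens; and a product of finite groups acting factor-wise on a product of quasi-projective varieties has quotient equal to the product of quotients, giving $W_{\{1,\ldots,i\}}/(S_i\times S_{n-i})\cong Y_i\times U_{n-i}$ as schemes, not merely as sets.
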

\begin{proof} [Proof of Proposition \ref{prop:closing}]
In our case, we take $Y = p$ and $U = X(-p)$. Then, 
$$\big[X_n\big] = \sum_{i+j=n}\big[1\big]\cdot\big[(X,-p)_j\big] = \sum_{j=0}^n \big[(X,-p)_j\big].$$
Since $X$ is smooth, we have $X_d = \Div_d^+(X)$.
Applying the above equation, we find
$$
    \zdiv(X;t) 
            = \sum_{d = 0}^\infty t^d  \sum_{j=0}^d \big[X(-p)_j\big] 
            = \zdiv(X,-p;t) (1 + t + t^2 + t^3 + \cdots) 
            = \frac{\zdiv(X,-p;t)}{1-t}.
$$
\end{proof}

\begin{proposition} Denote by $\G$ be the one-dimensional algebraic torus $\A^1-\{0\}$ over $k$ and by $\LL$ the class of $\A^1$ in the Grothendieck ring. Then:
\label{prop:gm}
$$
\zdiv(\G;t) = \zmot(\G;t)=\frac{1-t}{1-\LL t}.
$$
\end{proposition}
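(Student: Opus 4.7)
The plan is to reduce the computation to the known formula for $\A^1$ and then apply Proposition \ref{prop:closing}. First, since $\G = \A^1 \setminus \{0\}$ is smooth and has no marked points, the discussion in the introduction gives $\zdiv(\G;t) = \zmot(\G;t)$, so it suffices to identify the motivic zeta function of $\G$.

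Next, I would apply Proposition \ref{prop:closing} with $X = \A^1$ and $p = 0$; since $\A^1 \setminus \{0\} = \G$, this yields
\begin{equation*}
    \zdiv(\G;t) = \zdiv(\A^1,-0;t) = \zdiv(\A^1;t)\cdot(1-t).
\end{equation*}
Again using that $\A^1$ is smooth without marked points, $\zdiv(\A^1;t) = \zmot(\A^1;t) = \sum_{d\geq 0}[\A^1_d]\, t^d$.

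The remaining ingredient is the classical identification $\A^1_d \cong \A^d$ coming from the fundamental theorem on symmetric polynomials (an unordered $d$-tuple of points on $\A^1$ is determined by its elementary symmetric functions). Hence $[\A^1_d] = \LL^d$, so
\begin{equation*}
    \zmot(\A^1;t) = \sum_{d\geq 0} \LL^d t^d = \frac{1}{1-\LL t}.
\end{equation*}
Combining with the previous display gives $\zdiv(\G;t) = \frac{1-t}{1-\LL t}$, as claimed. There is no real obstacle here; the only conceptual input beyond the preceding results is the description of symmetric powers of the affine line, and everything else is a direct application of Proposition \ref{prop:closing}. (Alternatively, one can bypass Proposition \ref{prop:closing} and apply Lemma \ref{lemma:symmetricpower} directly with $X = \A^1$, $Y = \{0\}$ and $U = \G$ to obtain the telescoping identity $\zmot(\A^1;t) = \zmot(\G;t)/(1-t)$, giving the same conclusion.)
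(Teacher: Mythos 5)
Your argument is correct, and it reaches the formula by a slightly different route than the paper. Both proofs rest on the same two facts --- the identification $(\A^1)_d \cong \A^d$, so $[(\A^1)_d]=\LL^d$, and the decomposition of Lemma \ref{lemma:symmetricpower} applied to $\{0\}\subset \A^1$ with complement $\G$ --- but you organize them differently: you compute $\zmot(\A^1;t)=1/(1-\LL t)$ first and then pass to $\G$ by a single application of Proposition \ref{prop:closing} (equivalently, by multiplying the telescoping identity by $(1-t)$, which is legitimate since $1-t$ is a unit in the power series ring). The paper instead expands $[\G_d]=\LL^d-\sum_{i<d}[\G_i]$, arrives at the functional equation $\zdiv(\G;t)=\frac{1}{1-\LL t}-\frac{t\,\zdiv(\G;t)}{1-t}$, and solves it; unwinding that computation shows it is exactly the relation $\zdiv(\G;t)=\zmot(\A^1;t)(1-t)$ that you invoke directly, so your version is arguably cleaner and avoids re-deriving a special case of Proposition \ref{prop:closing}. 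One small point worth noting: strictly speaking $\A^1$ (like $\G$ itself) is not a stable curve, so writing $\zdiv(\A^1;t)$ relies on the convention that for a smooth unmarked quasiprojective curve the divisorial and Kapranov zeta functions coincide; this is the same convention the paper uses for $\G$, and Proposition \ref{prop:closing} as stated only requires smoothness and quasiprojectivity, so no circularity or gap arises.
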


\begin{proof}
By Lemma \ref{lemma:symmetricpower} applied to $X = \mathbb{A}^1$ and $U = \G$, we find that $\big[\A^1_d\big] = \sum_{i = 0}^d \big[\G_i\big]$. Using the fact that $(\A^1)_d = \A^d$, and therefore $\big[(\A^1)_d\big]=\LL^d$, we have:
\begin{equation*}
        \zdiv(\G;t)   
    = \sum_{d \geq 0} t^d \big[\G_d\big]
    = \sum_{d \geq 0} t^d \left(\LL^d - \sum_{i = 0}^{d-1} \big[\G_{i}\big]\right) 
            = \sum_{d \geq 0} t^d \LL^d  - \sum_{d \geq 0} t^d \sum_{i=0}^{d-1}\big[\G_{i}\big].
\end{equation*}
After re-indexing, we find:
\begin{equation*}
\begin{split}
    \zdiv(\G;t)   
            &= \frac{1}{1-\LL t} - t \sum_{d \geq 0} t^d \sum_{i = 0}^d\big[\G_{i}\big]\\
           &=\frac{1}{1-\LL t} - t \cdot \zdiv(\G;t)\cdot (1 + t + t^2 + \cdots)
           = \frac{1}{1-\LL t} - \frac{t \cdot\zdiv(\G;t) }{1-t} .
\end{split}
\end{equation*}
Solving for $\zdiv(\G,t)$, we find that $\zdiv(\G,t) = \frac{1-t}{1-\LL t}$, as claimed. The equality $\zdiv(\G;t) = \zmot(\G;t)$ holds, since $\GG$ is a smooth curve without marked points.
\end{proof}

\begin{proposition}
\label{prop:marked}
Let $(X, \vec{p},q)$ be a quasiprojective stable marked curve. Then,
$$
\zdiv\big(X, \vec{p}, q;t\big) = \zdiv\big(X, \vec{p},-q;t\big) \cdot \frac{1- \LL t}{1 - \mathbb{L}t - t +t^2}.
$$
\end{proposition}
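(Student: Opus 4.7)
The plan is to stratify $\Div^+_d(X,\vec{p},q)$ according to the exceptional chain of $\PP^1$-components sprouting at the marked point $q$, and then to factor the resulting generating function. Intuitively, $\Div^+_d(X,\vec{p},q)$ differs from $\Div^+_d(X,\vec{p},-q)$ precisely because the former allows divisor points to accumulate toward $q$---resolved by inserting an exceptional chain there---while the latter simply punctures the curve at $q$ and records only divisors that stay away.

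Concretely, I would first observe that any stable pair contributing to a stratum of $\Div^+_d(X,\vec{p},q)$ admits a unique decomposition into (i) a chain $C_1,\ldots,C_s$ (with $s\geq 0$) of exceptional $\PP^1$-components emanating from $q$, carrying divisor degrees $k_1,\ldots,k_s\geq 1$ with $k_1+\cdots+k_s=k$, and (ii) a stable pair of total degree $d-k$ over the dual graph of $(X,\vec{p})$ whose divisor avoids $q$. The data in (ii) is precisely a stratum of $\Div^+_{d-k}(X,\vec{p},-q)$. By Lemma~\ref{lemma:strata}, each chain component $C_i$ has exactly two non-divisor special points---two nodes if $C_i$ is interior in the chain, or one node and the tip marked point $q'$ if $C_i$ is the last component---so it contributes a factor $[\G_{k_i-1}]$ to the stratum class, and these factors multiply with the contribution of (ii) since the chain is attached to the remainder of $X'$ only at the single node where $q$ used to lie.

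Summing over all strata using Lemmas~\ref{lemma:sum} and~\ref{lemma:basic} yields
$$
\zdiv(X,\vec{p},q;t)\;=\;\zdiv(X,\vec{p},-q;t)\cdot F(t),
$$
where $F(t)=\sum_{s\geq 0}\bigl(\sum_{k\geq 1}[\G_{k-1}]\,t^k\bigr)^s$ is the chain generating function (with the $s=0$ term contributing $1$). Since the inner sum equals $t\cdot\zdiv(\G;t)$, Proposition~\ref{prop:gm} and a routine simplification give
$$
F(t)\;=\;\frac{1}{1-t\cdot\frac{1-t}{1-\LL t}}\;=\;\frac{1-\LL t}{1-\LL t-t+t^2},
$$
which is exactly the claimed factor. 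The step requiring care is the factorization in (i)--(ii): the moduli of the chain at $q$ and of the stable pair away from $q$ must decouple, which follows from the local nature of weighted stability and from the single-node attachment of the chain to $X'$.
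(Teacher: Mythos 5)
Your proposal is correct and follows essentially the same route as the paper: the chain of exceptional components at $q$ with degrees $(k_1,\ldots,k_s)$ is exactly the paper's sum over compositions $\alpha$ with $[\G]_\alpha=[\G_{\alpha_1-1}]\cdots[\G_{\alpha_l-1}]$, and your factor $F(t)=\bigl(1-t\cdot\zdiv(\G;t)\bigr)^{-1}$ combined with Proposition~\ref{prop:gm} is precisely the paper's computation. The only cosmetic difference is that you reorganize by chain length from the start, whereas the paper first sums over compositions and then regroups by their length.
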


\begin{proof} We have
\begin{equation*}
\begin{split}
    \zdiv\big(X,\vec{p},q;t\big)    &= \sum_{d \geq 0} t^d \sum_{j = 0}^d \big[\Div_{d-j}^+(X, \vec{p},-q)\big] \sum_{\alpha \in \text{comp}(j)} \big[\G\big]_\alpha \\
                &= \left(\sum_{d\geq 0} \big[\Div_d^+(X,\vec{p},-q)\big]t^d\right) \left(1+ \sum_{d \geq 1} t^d \sum_{\alpha \in \text{comp}(d)} \big[\G\big]_\alpha \right),
\end{split}
\end{equation*}
where we think of $j$ as the degree of the divisor restricted to exceptional components. 
Now, we evaluate the right term in this product. Re-organizing by the length of the composition, we find
\begin{equation*}\begin{split}
    1+\sum_{d \geq 1} t^d \sum_{\alpha \in \text{comp}(d)} \big[\G\big]_\alpha 
    &= 1+ \sum_{k\geq 1}\sum_{d\geq k}t^d\sum_{\substack{{\alpha\in\text{comp}(d)}\\ {\vert\alpha\vert=k}}} \big[\G\big]_\alpha\\
    &= 1+\sum_{k \geq 1} \big(t \cdot \zdiv(\G;t)\big)^k
    = \frac{1}{1-t \cdot \zdiv(\G,t)} .
\end{split}\end{equation*}
Applying Proposition \ref{prop:gm} and simplifying, we obtain the result.
\end{proof}

We are now ready to prove Theorem \ref{thm:main} from the introduction.
\begin{proof}[Proof of Theorem \ref{thm:main}]
Let $(X,\vec{p})$ be a pointed stable curve over $k$ with dual graph $G$. 
We use Propositions \ref{prop:loop} and \ref{prop:cross} to break up $X$ into its components.
Each node in $X$ yields a new marked point and a new hole. By Proposition \ref{prop:marked}, exchanging the $|E| + n$ marked points for a holes leads to factors of 
$$
\frac{1- \LL t}{1 - \LL t - t + t^2},
$$
Stitching the $2|E| + n$ holes leads to factors of $1-t$ by Proposition~\ref{prop:closing}. So we obtain
$$
\zdiv(X, \vec{p};t) = \left(\frac{1-\LL t}{1-\LL t-t+t^2}\right)^{|E|+n} (1-t)^{2 |E|+n} \prod_{v \in V}  \text{Z}_{\text{div}}\big(\widetilde{X}_v,t\big).
$$
Finally, we use that the motivic zeta function is equal to the divisorial zeta function for each $\widetilde{X}_v$ because $\widetilde{X}_v$ is smooth and does not have marked points.
\end{proof}


\bibliographystyle{amsalpha}
\bibliography{sample}{}

\end{document}